\numberwithin{equation}{section}
\newtheorem{theorem}{Theorem}[section]
\newtheorem{definition}[theorem]{Definition}
\newtheorem{example}[theorem]{Example}
\newtheorem{proposition}[theorem]{Proposition}
\newtheorem{corollary}[theorem]{Corollary}
\newtheorem{remark}{Remark}
\begin{document}
	
	\title[Perturbations of globally hypoelliptic pseudo-differential operators on $\mathbb{R}^n$]{Perturbations of globally hypoelliptic pseudo-differential operators on $\mathbb{R}^n$}
	
	\author[P. Tokoro]{Pedro M. Tokoro}
	\address{
		Programa de P\'os-Gradua\c c\~ao em Matem\'atica,  
		Universidade Federal do Paran\'a,  
		Caixa Postal 19096\\  CEP 81530-090, Curitiba, Paran\'a,  
		Brasil}
	\email{pedro.tokoro@ufpr.br}
	\thanks{The author thanks Alexandre Kirilov and Wagner A. A. de Moraes for the useful discussions. This study was financed in part by the Coordena\c c\~ao de Aperfei\c coamento de Pessoal de N\'ivel Superior - Brasil (CAPES) - Finance Code 001.}
	
	\subjclass{Primary 35H10 Secondary 35S05}
	
	\keywords{global hypoellipticity, pseudo-differential operators, perturbations by lower order terms}
	
	\begin{abstract}
		This paper demonstrates the stability of the global regularity for a class of pseudo-differential operators under lower-order perturbations. We establish that if an operator has a globally hypoelliptic symbol, its global regularity (in the sense of Schwartz functions and tempered distributions) is preserved when perturbed by operators of sufficiently lower order. This result applies in particular to operators within the Shubin and SG classes. Furthermore, we discuss why this stability result does not hold in the standard H\"ormander classes.
	\end{abstract}
	
	\maketitle
	
	\section{Introduction}
	
	This paper investigates the stability of global regularity for a class of pseudo-differential operators when subjected to lower-order perturbations. We focus on operators that satisfy a condition generalizing the standard notion of loss of derivatives in Sobolev spaces associated with temperate weights.
	
	The stability of hypoellipticity under perturbations has been a significant area of research. Alberto Parmeggiani, in \cite{Par2015}, studied the local hypoellipticity of differential operators $P$ satisfying subelliptic estimates, under perturbations $A$ whose order is strictly less than the loss of derivatives of $P$ on the scale of local Sobolev spaces. These findings were later extended to more general pseudo-differential operators by Parenti and Parmeggiani in \cite{PP2018}. See also \cite{PP2005} for results on operators with loss of derivatives.
	
	In a global context, the stability of global hypoellipticity for subelliptic operators on the torus has been explored in various frameworks, including smooth, Gevrey, and Denjoy-Carleman (e.g., \cite{BRCCJ2016,CC2017,FP2023,FP2024,FPV2020}). In \cite{BRCCJ2016}, the authors also presented results on perturbations of sums of squares of vector fields on the product of a compact Lie group with a closed manifold. Our approach, given the global nature of our results, aligns more closely with the methods used for operators on the torus than with Parenti and Parmeggiani's locally-focused ideas, which rely on singular supports and local Sobolev spaces.
	
	Let $\mathcal{S}(\mathbb{R}^n)$ be the Schwartz space on $\mathbb{R}^n$ and $\mathcal{S}'(\mathbb{R}^n)$ the space of temperate distributions. For a linear operator $P:\mathcal{S}'(\mathbb{R}^n)\to \mathcal{S}'(\mathbb{R}^n)$ such that $P(\mathcal{S}(\mathbb{R}^n))\subset \mathcal{S}'(\mathbb{R}^n)$, we say that $P$ is $\mathcal{S}$-globally hypoelliptic if
	\[u\in\mathcal{S}'(\mathbb{R}^n),\ Pu\in\mathcal{S}(\mathbb{R}^n),\ \Rightarrow\ u\in\mathcal{S}(\mathbb{R}^n).\]
	
	More precisely, we will consider $P$ a pseudo-differential operator with symbol $p(x,\xi)$ satisfying estimates of the form:
	\[|\partial_\xi^\alpha\partial_x^\beta p(x,\xi)| \lesssim M(x,\xi)\Psi(x,\xi)^{-|\alpha|}\Phi(x,\xi)^{-|\beta|},\quad\forall (x,\xi)\in\mathbb{R}^{2n},\ \forall  \alpha,\beta\in\mathbb{N}_0^n.\]
	
	Here, $M$ is a temperate weight and $\Phi,\Psi$ are sub-linear temperate weights (see Definitions \ref{sublinear} and \ref{temperate}). Here, given two non-negative functions $f,g:X\to[0,+\infty)$ on a set $X$, the notation $f(x)\lesssim g(x)$ means that there exists $C>0$ such that $f(x)\leq Cg(x)$, for all $x\in X$.
	
	These symbol classes are similar to those introduced by Beals in \cite{Beals} and are particular cases of the symbol classes of the Weyl-H\"ormander calculus \cite{Hormander_Weyl} for appropriately chosen slowly varying metrics on the phase space $T^*\mathbb{R}^n = \mathbb{R}^{2n}$. A comprehensive treament of the pseudo-differential calculus in the classes considered in this work can be found in the first chapter of \cite{NicRod}.
	
	A temperate weight $M$ generalizes the notion of the order of an operator. Under certain assumptions, we can define the Sobolev space $H(M)$ associated with $M$. For an operator $P$ with symbol in $S(M';\Phi,\Psi)$ satisfying hypoelliptic estimates, we can obtain a weight $M_0\lesssim M'$ such that, for every weight $M$,
	\begin{equation}\label{loss_intro}
		u\in\mathcal{S}'(\mathbb{R}^n),\ Pu\in H(M)\ \Rightarrow\ u\in H(MM_0),
	\end{equation}
	which generalizes the usual notion of hypoellipticity with loss of derivatives:
	\[u\in\mathcal{D}'(\Omega),\ Pu\in H^s(\Omega)\ \Rightarrow \ u\in H^{s+m'}(\Omega),\]
	where $\Omega$ is an open subset of $\mathbb{R}^n$ or a smooth manifold, $H^s(\Omega)$, $s\in\mathbb{R}^n$, are the usual Sobolev spaces on $\Omega$, $P$ is a pseudo-differential operator of order $m$ on $\Omega$ and $m'\leq m$. 
	
	The primary objective of this work is to demonstrate that if $P$ satisfies condition \eqref{loss_intro} and $A\in S(\tilde M;\Phi,\Psi)$, where $\tilde M$ has strictly smaller growth order than $M_0$, then the perturbed operator $P+A$ still satisfies \eqref{loss_intro}. This implies that $P+A$ remains $\mathcal{S}$-globally hypoelliptic. We also examine the specific case of operators with symbols in Shubin and SG classes, which can be derived by selecting appropriate $M,\Phi,\Psi$. Furthermore, we show that our results do not extend to the standard H\"ormander classes $S^m_{\rho,\delta}(\mathbb{R}^n)$. Finally, we stablish some connections between a weaker notion of $\mathcal{S}$-global hypoellipticity and the global solvability of $P$.

	\section{Global pseudo-differential operators}
	
	In this section, we lay out the key definitions and essential facts required for our results. For detailed proofs and further discussion, see \cite[Chapter 1]{NicRod}.
	
	We say that a continuous function $\Phi:\mathbb{R}^{2n}\to (0,+\infty)$ is a sub-linear weight if
	\begin{equation}\label{sublinear}
		1 \leq \Phi(x,\xi) \lesssim 1+|x|+|\xi|, \quad \forall (x,\xi)\in\mathbb{R}^{2n},
	\end{equation}
	and a temperate weight if there exists $s>0$ such that
	\begin{equation}\label{temperate}
		\Phi(x+y,\xi+\eta) \lesssim \Phi(x,\xi)(1+|y|+|\eta|)^s,\quad\forall (x,\xi),(y,\eta)\in\mathbb{R}^{2n}.
	\end{equation}
	
	If $\Phi$ is a temperate weight, if we put $x'+y$ in the place of $x$, $\xi'+\eta$ in the place of $\xi$, $-y$ in the place of $y$, and $-\eta$ in the place of $\eta$, estimate \eqref{temperate} yields the lower bound
	\begin{equation*}
		\Phi(x'+y,\xi'+\eta)\gtrsim \Phi(x',\xi')(1+|y|+|\eta|)^{-s},\quad\forall (x',\xi'),(y,\eta)\in\mathbb{R}^{2n}.
	\end{equation*}
	
	In particular, taking $x'=\xi'=0$, we obtain
	\begin{equation*}
		(1+|y|+|\eta|)^{-s}\lesssim \Phi(y,\eta)\lesssim (1+|y|+|\eta|)^s,\quad\forall (y,\eta)\in\mathbb{R}^{2n}.
	\end{equation*}
	
	Fix two temperate sub-linear weights $\Phi,\Psi$. Given a temperate weight $M$, we denote by $S(M;\Phi,\Psi)$ the space of all $p(x,\xi)\in C^\infty(\mathbb{R}^{2n})$ such that, for every $\alpha,\beta\in\mathbb{N}_0^n$ and $(x,\xi)\in\mathbb{R}^{2n}$, we have
	\begin{equation*}
		|\partial_\xi^\alpha\partial_x^\beta p(x,\xi)|\lesssim M(x,\xi)\Psi(x,\xi)^{-|\alpha|}\Phi(x,\xi)^{-|\beta|}.
	\end{equation*}
	
	Any symbol $p(x,\xi)\in S(M;\Phi,\Psi)$ defines an operator $p(x,D)$ on $\mathcal{S}(\mathbb{R}^n)\to \mathcal{S}(\mathbb{R}^n)$ via the usual left-quantization
	\begin{equation*}
		\mathrm{Op}(p)u(x) = p(x,D)u(x) = (2\pi)^{-\frac{n}{2}}\int_{\mathbb{R}^n} e^{i\xi\cdot x}p(x,\xi)\widehat{u}(\xi)\,\mathrm{d}\xi,\quad u\in\mathcal{S}(\mathbb{R}^n),
	\end{equation*}
	which can be extended to a continuous operator $\mathcal{S}'(\mathbb{R}^n)\to \mathcal{S}'(\mathbb{R}^n)$. In this case, we say that $p(x,D)$ is a pseudo-differential operator with symbol $p(x,\xi)$.
	
	We denote by $OPS(M;\Phi,\Psi)$ the space of pseudo-differential operators on $\mathbb{R}^n$ with symbol in $S(M;\Phi,\Psi)$.
	
	Given two temperate sub-linear weights $\Phi,\Psi$, the function
	\[h(x,\xi)=\Phi(x,\xi)^{-1}\Psi(x,\xi)^{-1},\quad (x,\xi)\in\mathbb{R}^{2n},\]
	will be called the Planck function.

	We may also assume there exists $\delta>0$ such that
	\begin{equation*}
		\Phi(x,\xi)\Psi(x,\xi)\gtrsim (1+|x|+|\xi|)^\delta,\quad\forall (x,\xi)\in\mathbb{R}^{2n}.
	\end{equation*}
	
	This property is called the strong uncertainty principle. An important consequence of the strong uncertainty principle is the following:
	\begin{equation*}
		\bigcap_{j=0}^\infty S(Mh^j;\Phi,\Psi) = \mathcal{S}(\mathbb{R}^{2n}).
	\end{equation*}
	
	If $r(x,\xi)\in\mathcal{S}(\mathbb{R}^{2n})$, then $r(x,D)$ extends to an operator $\mathcal{S}'(\mathbb{R}^n)\to \mathcal{S}(\mathbb{R}^n)$. In this case, we say that $r(x,D)$ is a regularizing operator.
	
	We say that a symbol $p\in S(M;\Phi,\Psi)$ is globally elliptic if there exists $R>0$ such that
	\begin{equation*}
		|p(x,\xi)|\gtrsim M(x,\xi),\quad |x|+|\xi|\geq R.
	\end{equation*}
	
	More generally, we say that a symbol $p(x,\xi)\in S(M;\Phi,\Psi)$ is globally hypoelliptic if there exists a temperate weight $M_0\lesssim M$ and $R >0$ such that
	\begin{equation*}
		|p(x,\xi)|\gtrsim M_0(x,\xi),\quad |x|+|\xi|\geq R,
	\end{equation*}
	and, for every $\alpha,\beta\in\mathbb{N}_0^n$, we have
	\begin{equation*}
		|\partial_\xi^\alpha\partial_x^\beta p(x,\xi)|\lesssim |p(x,\xi)|\Psi(x,\xi)^{-|\alpha|}\Phi(x,\xi)^{-|\beta|},\quad |x|+|\xi|\geq R.
	\end{equation*}
	
	We denote by $\mathrm{Hypo}(M,M_0;\Phi,\Psi)$ the set of such symbols.
	
	Clearly, every globally elliptic symbol is also globally hypoelliptic with $M_0=M$. The second condition on the definition of globally hypoelliptic symbols is satisfied since we have
	\[p(x,\xi)\lesssim M(x,\xi)\lesssim p(x,\xi),\]
	for big enough $|x|+|\xi|$.
	
	\begin{theorem}\label{parametrix}
		Assume the strong uncertainty principle and suppose that $P$ has symbol in $\mathrm{Hypo}(M,M_0;\Phi,\Psi)$. Then, there exists an operator $Q\in OPS(M_0^{-1};\Phi,\Psi)$ and regularizing operators $S_1,S_2$ such that
		\begin{equation*}
			QP=I+S_1\quad\text{and}\quad PQ=I+S_2, 
		\end{equation*}
		where $I$ is the identity operator. In this case, we say that $Q$ is a parametrix of $P$.
	\end{theorem}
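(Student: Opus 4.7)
The plan is to build $Q$ by the standard formal Neumann series construction adapted to the calculus $S(M;\Phi,\Psi)$, letting the hypoelliptic estimate on $p$ play the role that ellipticity plays in the classical case. As a first approximation I would fix a cutoff $\chi\in C^\infty(\mathbb{R}^{2n})$ with $\chi(x,\xi)=0$ for $|x|+|\xi|\leq R$ and $\chi(x,\xi)=1$ for $|x|+|\xi|\geq 2R$, and set
\[q_0(x,\xi)=\frac{\chi(x,\xi)}{p(x,\xi)}.\]
The central technical step, and in my view the main obstacle, is to verify that $q_0\in S(M_0^{-1};\Phi,\Psi)$. For this I would differentiate $1/p$ via Faà di Bruno and repeatedly apply the hypoelliptic derivative estimate $|\partial_\xi^\alpha\partial_x^\beta p|\lesssim |p|\Psi^{-|\alpha|}\Phi^{-|\beta|}$ on the support of $\chi$, while the lower bound $|p|\gtrsim M_0$ there controls the $1/p$ factors; the derivatives of $\chi$ itself are compactly supported and hence contribute a Schwartz term.

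Next I would use the symbolic composition theorem in the calculus $S(\cdot;\Phi,\Psi)$ to compute $q_0\#p$. Writing the Leibniz-type asymptotic expansion, the leading term is $q_0 p=\chi$, which differs from $1$ only on a compact set and is therefore a Schwartz symbol there, while the sub-leading terms lie in $S(h;\Phi,\Psi)$ with $h=\Phi^{-1}\Psi^{-1}$ the Planck function. Thus $q_0\#p=I-r_1$ with the symbol of $r_1$ in $S(h;\Phi,\Psi)$ modulo $\mathcal{S}(\mathbb{R}^{2n})$. I would then iterate: defining $q_k$ so that $q_k\#p\equiv r_1^{\#k}\pmod{S(h^{k+1};\Phi,\Psi)}$, each new $q_k$ lies in $S(M_0^{-1}h^k;\Phi,\Psi)$.

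I would then invoke the standard Borel/asymptotic summation theorem for these symbol classes to produce a single $q\in S(M_0^{-1};\Phi,\Psi)$ with $q\sim\sum_{k\geq 0}q_k$, meaning $q-\sum_{k<N}q_k\in S(M_0^{-1}h^N;\Phi,\Psi)$ for every $N$. Setting $Q=\mathrm{Op}(q)$, the remainder $QP-I$ then has symbol in $\bigcap_{N}S(h^N;\Phi,\Psi)$, which by the strong uncertainty principle equals $\mathcal{S}(\mathbb{R}^{2n})$, so $QP=I+S_1$ with $S_1$ regularizing. A completely symmetric construction, using the right-composition asymptotic expansion, produces $Q'\in OPS(M_0^{-1};\Phi,\Psi)$ with $PQ'=I+S_2'$. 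Finally, the standard argument $Q'=QPQ'-S_1Q'=Q+QS_2'-S_1Q'$ shows $Q'-Q$ is regularizing, so $PQ=I+S_2$ with $S_2$ regularizing as well, giving the desired two-sided parametrix.
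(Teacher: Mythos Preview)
The paper does not supply its own proof of this theorem: it is stated as background in Section~2 with the blanket reference ``For detailed proofs and further discussion, see \cite[Chapter~1]{NicRod}'', and later the text points specifically to \cite[Theorem~1.3.6]{NicRod}. Your outline is precisely the standard parametrix construction carried out in that reference (cutoff inverse $q_0=\chi/p$, Fa\`a di Bruno plus the hypoelliptic derivative bounds to place $q_0\in S(M_0^{-1};\Phi,\Psi)$, Neumann iteration, Borel summation, and the strong uncertainty principle to identify $\bigcap_N S(h^N;\Phi,\Psi)$ with $\mathcal{S}(\mathbb{R}^{2n})$), so there is nothing to compare: your approach \emph{is} the cited one.

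One small point worth tightening: when you assert that the sub-leading composition terms $\frac{1}{\alpha!}\partial_\xi^\alpha q_0\,D_x^\alpha p$ lie in $S(h^{|\alpha|};\Phi,\Psi)$, this does not follow merely from $q_0\in S(M_0^{-1};\Phi,\Psi)$ and $p\in S(M;\Phi,\Psi)$, which would only give $S(MM_0^{-1}h^{|\alpha|};\Phi,\Psi)$. You must again invoke the hypoelliptic estimate $|\partial_\xi^\alpha\partial_x^\beta p|\lesssim|p|\Psi^{-|\alpha|}\Phi^{-|\beta|}$ on the support of $\chi$, so that the $|p|$ coming from the derivatives of $p$ cancels the $|p|^{-1}$ coming from those of $q_0$. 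You already have this ingredient in hand from the first step, so it is only a matter of saying so explicitly.
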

	
	\begin{remark}
		The parametrix is unique modulo regularizing operators.
	\end{remark}
	
	\begin{definition}
		We say that an operator $P:\mathcal{S}(\mathbb{R}^n)\to \mathcal{S}(\mathbb{R}^n)$, $\mathcal{S}'(\mathbb{R}^n)\to \mathcal{S}'(\mathbb{R}^n)$ is $\mathcal{S}$-globally hypoelliptic if
		\begin{equation*}
			u\in \mathcal{S}'(\mathbb{R}^n),\ Pu\in \mathcal{S}(\mathbb{R}^n)\ \Rightarrow\ u\in \mathcal{S}(\mathbb{R}^n).
		\end{equation*}
	\end{definition}
	
	As a consequence of the existence of parametrices, we have the following:
	
	\begin{corollary}
		Assume the strong uncertainty principle. If the operator $P$ has symbol in $\mathrm{Hypo}(M,M_0;\Phi,\Psi)$, then $P$ is $\mathcal{S}$-globally hypoelliptic.
	\end{corollary}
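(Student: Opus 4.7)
The plan is to apply the parametrix provided by Theorem~\ref{parametrix} and exploit the fact that pseudo-differential operators in our calculus map $\mathcal{S}(\mathbb{R}^n)$ into $\mathcal{S}(\mathbb{R}^n)$, while regularizing operators upgrade tempered distributions to Schwartz functions. This is the standard parametrix argument adapted to the global setting, and here everything we need has already been set up in the preceding statements of the section.

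Concretely, suppose $u\in\mathcal{S}'(\mathbb{R}^n)$ and $Pu\in\mathcal{S}(\mathbb{R}^n)$. I would invoke Theorem~\ref{parametrix} to obtain an operator $Q\in OPS(M_0^{-1};\Phi,\Psi)$ and a regularizing operator $S_1$ with $QP=I+S_1$. Applying both sides to $u$ yields the identity
\[
u = Q(Pu) - S_1 u.
\]
Now I would analyse each term on the right. For the first term, since $Pu\in\mathcal{S}(\mathbb{R}^n)$ and $Q$ is a pseudo-differential operator of the calculus (hence acts continuously on $\mathcal{S}(\mathbb{R}^n)$), we get $Q(Pu)\in\mathcal{S}(\mathbb{R}^n)$. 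For the second term, the regularizing operator $S_1$ extends to a continuous map $\mathcal{S}'(\mathbb{R}^n)\to\mathcal{S}(\mathbb{R}^n)$ (this is exactly the content of the comment after the strong uncertainty principle, applied to a symbol in $\mathcal{S}(\mathbb{R}^{2n})$), so $S_1 u\in\mathcal{S}(\mathbb{R}^n)$. Subtracting, $u\in\mathcal{S}(\mathbb{R}^n)$, which is the desired conclusion.

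There is essentially no obstacle here beyond bookkeeping: the entire hypoelliptic estimate on the symbol of $P$ has been absorbed into the existence of the parametrix $Q$, so the corollary is a direct consequence of Theorem~\ref{parametrix} together with the mapping properties of operators in $OPS(M_0^{-1};\Phi,\Psi)$ and of regularizing operators. The only point worth being careful about is justifying that $Q$ and $S_1$ can indeed be applied to the distribution $u$, which is granted because both operators extend continuously from $\mathcal{S}'(\mathbb{R}^n)$ to $\mathcal{S}'(\mathbb{R}^n)$ (and $S_1$ in fact lands in $\mathcal{S}(\mathbb{R}^n)$).
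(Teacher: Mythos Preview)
Your argument is correct and is exactly the standard parametrix argument the paper has in mind; the paper in fact does not spell out a proof, merely stating the corollary ``as a consequence of the existence of parametrices,'' which is precisely what you carry out. The only cosmetic remark is that you could invoke the mapping property of $Q$ on $\mathcal{S}(\mathbb{R}^n)$ directly from the definition of $OPS(M_0^{-1};\Phi,\Psi)$ rather than as a separate observation, but nothing is missing.
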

	
	We say that a temperate weight $M$ is a regular weight if $M$ is a symbol in its own class, that is, $M\in S(M;\Phi,\Psi)$. The existence of regular weights is related to the existence of elliptic symbols in a given class. Under slow variation assumptions on the sub-linear weights $\Phi,\Psi$, we can always obtain regular weights, see \cite[Section 1.3]{NicRod} for a detailed discussion.
	
	\begin{definition}
		Fix $\Phi,\Psi$, let $M$ be a regular weight, and consider $P$ a pseudo-differential operator with elliptic symbol $p(x,\xi)\in S(M;\Phi,\Psi)$. Let $Q$ be a left-parametrix of $P$, that is, $QP+I=S$, where $S$ is a regularizing operator. Then, we define the Sobolev space
		\begin{equation*}
			H(M)=\{u\in \mathcal{S}'(\mathbb{R}^n)\,:\, Pu\in L^2(\mathbb{R}^n)\},
		\end{equation*}
		which is a Banach space endowed with the norm
		\begin{equation*}
			\|u\|_{H(M)}=\|Pu\|_{L^2}+\|Su\|_{L^2}.
		\end{equation*}
	\end{definition}
	
	Actually, $H(M)$ is a Hilbert space with the inner product
	\[(u,v)_{H(M)} = (Pu,Pv)_{L^2}+(Su,Sv)_{L^2},\quad u,v\in H(M),\]
	which induces a norm equivalent to $\|\cdot\|_{H(M)}$, see \cite[Proposition 1.5.3]{NicRod}.
	
	The term $\|Su\|_{L^2}$ is necessary in order to obtaining a norm. One can show that this definition is independent on the operator $P$ in the sense that different choices of elliptic operators yields equivalent norms. Moreover, using Anti-Wick techniques, one can show also that these spaces do not depend on $\Phi,\Psi$, as shown in \cite[Section 1.7]{NicRod}. For any regular weight $M$, we have continuous inclusions
	\begin{equation*}
		\mathcal{S}(\mathbb{R}^n) \hookrightarrow H(M) \hookrightarrow \mathcal{S}'(\mathbb{R}^n),
	\end{equation*}
	where the inclusion of $\mathcal{S}(\mathbb{R}^n)$ in $H(M)$ is also dense.
	
	We say that a weight $M$ tends to zero at infinity if
	\[M(x,\xi) \to 0,\quad |x|+|\xi|\to\infty.\]
	
	\begin{proposition}\label{cont_sobolev}
		Let $P$ be an operator with symbol in $S(M;\Phi,\Psi)$, where $M$ is a regular weight.
		\begin{enumerate}
			\item For every regular weight $M'$, $P$ defines a bounded operator $H(M')\to H(M'/M)$.
			\item If $M_1,M_2$ are regular weights such that $MM_2/M_1$ tends to zero at infinity, then $P$ defines a compact operator $H(M_1)\to H(M_2)$.
		\end{enumerate}
	\end{proposition}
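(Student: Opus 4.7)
The plan is to reduce both statements to $L^2$-level claims by conjugating with defining elliptic operators and their parametrices; part (1) will then become the $L^2$-boundedness of an operator with symbol in $S(1;\Phi,\Psi)$, and part (2) the $L^2$-compactness of an operator whose symbol decays at infinity.

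\textbf{Plan for (1).} I would choose elliptic $E\in OPS(M';\Phi,\Psi)$ and $F\in OPS(M'/M;\Phi,\Psi)$ defining $H(M')$ and $H(M'/M)$ respectively, together with a parametrix $Q_E\in OPS((M')^{-1};\Phi,\Psi)$ of $E$, so that $Q_E E=I+R_E$ with $R_E$ regularizing; let $R_F$ denote the regularizing error appearing in the norm on $H(M'/M)$. For $u\in\mathcal{S}(\mathbb{R}^n)$ the decomposition $Pu=PQ_E(Eu)-PR_E u$ would give
\begin{equation*}
FPu=(FPQ_E)(Eu)-(FPR_E)u.
\end{equation*}
The composition $FPQ_E$ has symbol in $S((M'/M)\cdot M\cdot(M')^{-1};\Phi,\Psi)=S(1;\Phi,\Psi)$, so a Calder\'on-Vaillancourt theorem in this calculus gives $\|FPQ_E(Eu)\|_{L^2}\lesssim\|Eu\|_{L^2}\leq\|u\|_{H(M')}$. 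Both $FPR_E$ and $R_F P$ are regularizing (composition with a regularizing operator is regularizing), hence continuous $\mathcal{S}'(\mathbb{R}^n)\to\mathcal{S}(\mathbb{R}^n)$; together with the continuous embeddings $H(M')\hookrightarrow\mathcal{S}'(\mathbb{R}^n)$ and $\mathcal{S}(\mathbb{R}^n)\hookrightarrow L^2(\mathbb{R}^n)$, they are bounded $H(M')\to L^2$. Adding the two contributions yields $\|Pu\|_{H(M'/M)}\lesssim\|u\|_{H(M')}$ on $\mathcal{S}(\mathbb{R}^n)$, and density of $\mathcal{S}(\mathbb{R}^n)$ in $H(M')$ extends $P$ to the desired bounded operator.

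\textbf{Plan for (2).} Once (1) is in hand, $P:H(M_1)\to H(M_1/M)$ is bounded, so it will suffice to show the inclusion $H(M_1/M)\hookrightarrow H(M_2)$ is compact whenever $MM_2/M_1$ tends to zero at infinity. The same parametrix conjugation as in (1) will reduce this, modulo regularizing (hence compact) errors, to the $L^2$-compactness of an operator with symbol in $S(MM_2/M_1;\Phi,\Psi)$. The whole proof of (2) therefore rests on the following key lemma: every operator with symbol in $S(m;\Phi,\Psi)$, where $m$ is a temperate weight tending to zero at infinity, is compact on $L^2(\mathbb{R}^n)$.

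\textbf{Main obstacle.} The hardest step will be this lemma. I would attack it by phase-space truncation: pick cutoffs $\chi_N\in C_c^\infty(\mathbb{R}^{2n})$ equal to $1$ on $\{|x|+|\xi|\leq N\}$ with $|\partial^\gamma\chi_N|$ bounded uniformly in $N$. The truncated symbol $\chi_N p\in\mathcal{S}(\mathbb{R}^{2n})$ defines a regularizing (hence Hilbert-Schmidt, hence compact) $L^2$-operator. The delicate step is to show that the tail $(1-\chi_N)p$, viewed as a symbol in $S(1;\Phi,\Psi)$, has all seminorms going to zero as $N\to\infty$, using that $m\to 0$ at infinity and that derivatives of the cutoff interact harmlessly with the weights $\Phi,\Psi$; Calder\'on-Vaillancourt will then give $\|\mathrm{Op}((1-\chi_N)p)\|_{L^2\to L^2}\to 0$, exhibiting the operator associated with $p$ as a norm limit of compact operators. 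Making these seminorm estimates precise is the main technical ingredient; everything else follows from the composition calculus already used in (1).
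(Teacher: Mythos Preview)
The paper does not actually prove this proposition: it is quoted as background from \cite[Chapter 1]{NicRod}, so there is no ``paper's own proof'' to compare against. That said, your strategy is the standard one and is essentially what one finds in Nicola--Rodino: reduce to $L^2$ via elliptic conjugation and parametrices, invoke Calder\'on--Vaillancourt for $S(1;\Phi,\Psi)$ in part~(1), and prove an $L^2$-compactness lemma for symbols with weight tending to zero in part~(2). The reduction steps and the handling of regularizing remainders are all fine.

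One point to tighten in your key lemma: merely requiring $|\partial^\gamma\chi_N|$ to be \emph{uniformly bounded} in $N$ is not enough. If you take cutoffs with a fixed-width transition annulus $N\le|z|\le N+1$, then on that annulus the sub-linear weights can be as large as $\Psi,\Phi\lesssim N$, and the Leibniz cross-terms contribute a factor $\Psi^{|\alpha'|}\Phi^{|\beta'|}\sim N^{|\alpha'|+|\beta'|}$ that is not killed by $m\to0$ alone. The fix is to use dilated cutoffs $\chi_N(z)=\chi(z/N)$, so that $|\partial^\gamma\chi_N|\lesssim N^{-|\gamma|}$; then on the support $N\le|z|\le 2N$ the factor $N^{-|\alpha'|-|\beta'|}\Psi^{|\alpha'|}\Phi^{|\beta'|}$ is bounded by sub-linearity, and the remaining $\sup_{|z|\ge N}m(z)\to0$ gives the $S(1;\Phi,\Psi)$-seminorm decay you need. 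With that adjustment your argument goes through.
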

	
	In particular, if $M_2\lesssim M_1$, then the inclusion $H(M_1)\hookrightarrow H(M_2)$ is continuous and, if $M_2/M_1$ tends to zero at infinity, then the inclusion $H(M_1)\hookrightarrow H(M_2)$ is also compact. Moreover, if $\Phi$ and $\Psi$ are regular weights and the strong uncertainty principle holds, we have the topological equalities
	\[\bigcap_{s,t}H(\Phi^t\Psi^s)=\mathcal{S}(\mathbb{R}^n)\quad\text{and}\quad \bigcup_{s,t}H(\Phi^t\Psi^s)=\mathcal{S}'(\mathbb{R}^n).\]
	
	In particular, $\mathcal{S}(\mathbb{R}^n)$ is a FS space and $\mathcal{S}'(\mathbb{R}^n)$ is a DFS space, see \cite{Komatsu1967}.
	
	Still assuming the strong uncertainty principle, the existence of a parametrix for operators with globally hypoelliptic symbols implies the following estimate on Sobolev spaces:
	
	\begin{proposition}
		Suppose that $P$ has symbol in $\mathrm{Hypo}(M',M_0;\Phi,\Psi)$ for certain temperate weights $M',M_0$, $M_0\lesssim M'$. Then, for every temperate weights $M,\tilde M$, we have
		\[\|u\|_{H(MM_0)}\lesssim \|Pu\|_{H(M)}+\|u\|_{H(\tilde M)},\quad u\in\mathcal{S}(\mathbb{R}^n).\]
		
		In particular, for every temperate weight $M$, we have
		\begin{equation}\label{gh_loss}
			u\in\mathcal{S}'(\mathbb{R}^n),\ Pu\in H(M)\ \Rightarrow\ u\in H(MM_0).
		\end{equation}
	\end{proposition}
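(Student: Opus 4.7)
The plan is to invoke the existence of a parametrix from Theorem \ref{parametrix} and then translate the identity $QP = I + S_1$ into the claimed Sobolev estimate via the continuity results in Proposition \ref{cont_sobolev}. Since $p \in \mathrm{Hypo}(M', M_0; \Phi, \Psi)$, Theorem \ref{parametrix} furnishes $Q \in OPS(M_0^{-1}; \Phi, \Psi)$ and a regularizing operator $S_1$ with $QP = I + S_1$ on $\mathcal{S}'(\mathbb{R}^n)$. This identity will be the backbone of the entire argument; all the work amounts to reading off mapping properties of $Q$ and $S_1$ between the appropriate Sobolev spaces.

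For the estimate, fix $u \in \mathcal{S}(\mathbb{R}^n)$ and write $u = QPu - S_1 u$. By Proposition \ref{cont_sobolev}(1) applied to $Q$ with symbol of order $M_0^{-1}$, the operator $Q$ is bounded from $H(M)$ into $H(M/M_0^{-1}) = H(MM_0)$, yielding
\[
\|QPu\|_{H(MM_0)} \lesssim \|Pu\|_{H(M)}.
\]
On the other hand, $S_1$ is regularizing, so its Schwartz kernel is in $\mathcal{S}(\mathbb{R}^{2n})$; this implies that $S_1$ defines a bounded map between any two spaces $H(\tilde M) \to H(MM_0)$, whence
\[
\|S_1 u\|_{H(MM_0)} \lesssim \|u\|_{H(\tilde M)}.
\]
Combining the two estimates through the triangle inequality produces the desired bound on $\|u\|_{H(MM_0)}$.

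For the implication, the key observation is that the identity $u = QPu - S_1 u$ makes sense already in $\mathcal{S}'(\mathbb{R}^n)$, because both $Q$ and $S_1$ extend continuously to the space of tempered distributions. Suppose $u \in \mathcal{S}'(\mathbb{R}^n)$ and $Pu \in H(M)$. By Proposition \ref{cont_sobolev}(1) again, $QPu \in H(MM_0)$; and since $S_1$ is regularizing, $S_1 u \in \mathcal{S}(\mathbb{R}^n) \hookrightarrow H(MM_0)$. Hence $u = QPu - S_1 u \in H(MM_0)$, concluding the proof.

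I expect no significant obstacle: the only subtlety is ensuring that the Sobolev spaces $H(M)$, $H(MM_0)$, and $H(\tilde M)$ are well defined (i.e.\ that the relevant products of temperate weights satisfy the regularity condition assumed in the definition of $H(\cdot)$), which under the slow variation hypotheses on $\Phi,\Psi$ discussed earlier can be arranged by replacing each weight by an equivalent regular one without affecting the statement. Everything else is a direct invocation of the parametrix identity together with the continuity and regularizing mapping properties already recorded in the section.
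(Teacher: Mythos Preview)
Your proof is correct and is exactly the argument the paper has in mind: the paper does not write out a proof at all but merely records the proposition as a direct consequence of the parametrix in Theorem~\ref{parametrix}, and your proposal spells out precisely that deduction via $u=QPu-S_1u$ together with the mapping properties from Proposition~\ref{cont_sobolev} and the regularizing nature of $S_1$. The caveat you flag about replacing temperate weights by equivalent regular ones so that the spaces $H(\cdot)$ are well defined is apt and is implicitly assumed throughout the paper.
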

	
	Here, \eqref{gh_loss} generalizes the notion of hypoellipticity with loss of derivatives. Assuming that $\Phi$ and $\Psi$ are regular weights satisfying strong uncertainty principle, we have that condition \eqref{gh_loss} implies the $\mathcal{S}$-global hypoellipticity of $P$. Indeed, suppose that $u\in\mathcal{S}'(\mathbb{R}^n)$ is such that
	\[Pu\in \mathcal{S}(\mathbb{R}^n) = \bigcap_{N\in\mathbb{N}}H(h^N).\]
	
	Then, for each $N\in\mathbb{N}$, we have $u\in H(h^NM_0)$, which gives us
	\[u\in \bigcap_{N\in\mathbb{N}}H(h^NM_0)=\mathcal{S}(\mathbb{R}^n).\]
	
	\section{The stability theorem}
	
	Now, we can prove our main theorem. Fix two sub-linear regular weights $\Phi,\Psi$ and assume the strong uncertainty principle.
	
	\begin{theorem}\label{thm_gh}
		 Let $M'$ be a regular weight and $P\in OPS(M';\Phi,\Psi)$ be a pseudo-differential operator satisfying \eqref{gh_loss}, for every regular weight $M$ and some fixed $M_0$. Suppose that $A\in OPS(\tilde{M};\Phi,\Psi)$, where $\tilde M$ is a regular weight such that $M_0\tilde{M}^{-1}\gtrsim h^{-\varepsilon}$ for some $\varepsilon>0$, where $h=\Phi^{-1}\Psi^{-1}$ is the Planck function. Then $P+A$ also satisfies \eqref{gh_loss} for every $M$ and the same $M_0$. In particular, $P+A$ is $\mathcal{S}$-globally hypoelliptic.
	\end{theorem}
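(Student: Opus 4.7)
The plan is a bootstrap argument on Sobolev regularity. Fix a regular weight $M$ and let $u\in\mathcal{S}'(\mathbb{R}^n)$ satisfy $(P+A)u\in H(M)$; the goal is to deduce $u\in H(MM_0)$. Since $\mathcal{S}'(\mathbb{R}^n)=\bigcup_{s,t}H(\Phi^t\Psi^s)$, we may fix a regular weight $N_0$ with $u\in H(N_0)$, and by taking the exponents of $\Phi,\Psi$ large enough in absolute value we may further assume $N_0\lesssim M\tilde M$. We then consider the sequence of regular weights $N_k = N_0(M_0/\tilde M)^k$ and show by induction that $u\in H(N_k)$, continuing as long as $N_k\lesssim M\tilde M$ and terminating once $N_k\gtrsim M\tilde M$.

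For the inductive step, suppose $u\in H(N_k)$ with $N_k\lesssim M\tilde M$, equivalently $N_k/\tilde M\lesssim M$. Proposition~\ref{cont_sobolev}(1) applied to $A\in OPS(\tilde M;\Phi,\Psi)$ gives $Au\in H(N_k/\tilde M)$. Since $N_k/\tilde M\lesssim M$, the continuous inclusion $H(M)\hookrightarrow H(N_k/\tilde M)$ yields $(P+A)u\in H(N_k/\tilde M)$ as well, so $Pu=(P+A)u-Au\in H(N_k/\tilde M)$. Applying the hypothesis \eqref{gh_loss} on $P$ to the weight $N_k/\tilde M$ gives $u\in H(N_kM_0/\tilde M)=H(N_{k+1})$. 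Once the iteration reaches some $k$ for which $N_k\gtrsim M\tilde M$, the roles reverse: $Au\in H(N_k/\tilde M)\hookrightarrow H(M)$, hence $Pu\in H(M)$, and one last application of \eqref{gh_loss} to $P$ (this time with weight $M$) gives $u\in H(MM_0)$, as required. The $\mathcal{S}$-global hypoellipticity of $P+A$ then follows exactly as in the discussion preceding the theorem, by specializing to $M=h^N$ for all $N\in\mathbb{N}$.

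The quantitative step that makes the above argument work is the finiteness of the iteration. The hypothesis $M_0/\tilde M\gtrsim h^{-\varepsilon}$ together with the strong uncertainty principle $\Phi\Psi\gtrsim(1+|x|+|\xi|)^{\delta}$ yields $(M_0/\tilde M)(x,\xi)\gtrsim(1+|x|+|\xi|)^{\delta\varepsilon}$, so each iteration multiplies the weight by at least $(1+|x|+|\xi|)^{\delta\varepsilon}$. Since $M\tilde M$ is temperate and hence bounded above by a fixed polynomial, while $N_0$ is bounded below by a fixed negative power of $(1+|x|+|\xi|)$, one quickly reads off an explicit $K$ (depending only on $\varepsilon,\delta$ and the polynomial orders of $M\tilde M$ and $N_0$) such that $N_K\gtrsim M\tilde M$ pointwise, triggering the exit condition.

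The main obstacle I anticipate is bookkeeping the regularity of the intermediate weights: one must check that $N_k$, $N_k/\tilde M$ and $N_kM_0/\tilde M$ are all regular, so that the associated Sobolev spaces are defined and Proposition~\ref{cont_sobolev} applies. Under the standing slow-variation assumptions this reduces to the fact that the class of regular weights is closed under products, quotients, and positive integer powers, which follows from the Leibniz rule applied to the defining estimate $M\in S(M;\Phi,\Psi)$. If $M_0$ itself is not automatically regular, one replaces it throughout by a regular weight comparable to it, which still witnesses \eqref{gh_loss} for $P$ and does not affect the iteration.
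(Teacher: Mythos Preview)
Your bootstrap is essentially the paper's own proof: starting from some $u\in H(N_0)$, one writes $Pu=(P+A)u-Au$, uses the mapping property of $A$ to place $Pu$ in a Sobolev space, applies the hypothesis \eqref{gh_loss} on $P$ to gain a factor $M_0/\tilde M\gtrsim h^{-\varepsilon}$, and repeats, terminating after finitely many steps thanks to the strong uncertainty principle. Your initial normalization $N_0\lesssim M\tilde M$ and your remarks on the regularity of the intermediate weights $N_k$ are mild refinements of presentation, but the argument is the same as the paper's.
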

	\begin{proof}
		Write $L=P+A$ and let $u\in\mathcal{S}'(\mathbb{R}^n)$ be such that $Lu\in H(M)$, for some regular weight $M$. Let us show that $u\in H(MM_0)$.
		
		First, since $u\in\mathcal{S}'(\mathbb{R}^n)$ and the strong uncertainty principle holds, there exists a regular weight $M_u$ such that $u\in H(M_u)$. By Proposition \ref{cont_sobolev}, we have that $Au\in H(M_u/\tilde M)$. Hence,
		\[Pu=Lu-Au\in H(M_1),\]
		where $M_1\in\{M,M_u/\tilde M\}$ is such that $M_1\lesssim M$ and $M_1\lesssim M_u/\tilde M$.
		
		If $M_1=M$, then $Pu\in H(M)$, which implies $u\in H(MM_0)$ by hypothesis. If $M_1=M_u/\tilde M$, then $Pu\in H(M_u/\tilde M)$ implies $u\in H(M_uM_0/\tilde M)$. Hence, $Au\in H(M_uM_0/\tilde{M}^2)$. In this case, we have
		\[Pu=Lu-Au\in H(M_2),\]
		where $M_2\in\{M,M_uM_0/\tilde{M}^2\}$ is such that $M_2\lesssim M$ and $M_2\lesssim M_uM_0/\tilde{M}^2$. If $M_2=M$, then $u\in H(MM_0)$ as before. Otherwise, $u\in H(M_uM_0^2/\tilde{M}^2)$.
		
		Proceeding recursively, after finitely many times, we may obtain $u\in H(MM_0)$. Indeed, by the strong uncertainty principle, we have
		\[M_u\frac{M_0^N}{\tilde{M}^N}\gtrsim M_uh^{-N\varepsilon} \gtrsim MM_0,\]
		for sufficiently big $N\in\mathbb{N}$, which concludes the proof.		
	\end{proof}
	
	\begin{remark}
		The previous result still holds if we consider systems of pseudo-differential operators satisfying \eqref{gh_loss}, as in \cite[Theorem 3.4]{FP2023}.
	\end{remark}

	\section{Applications}
	
	
	This section introduces two important classes of symbols, namely, the Shubin and SG classes, to which Theorem \ref{thm_gh}, is applicable. We will also briefly discuss why the perturbation property does not hold for operators whose symbols belong to the standard H\"ormander classes $S_{\rho,\delta}^m(\mathbb{R}^n)$. For a detailed exposition on the Shubin and SG classes, we refer the reader to Chapters 2 and 3 of \cite{NicRod}. A complete treatment of the pseudo-differential calculus in the H\"ormander classes can be found in \cite{Kumano-go}.
	
	\subsection{Shubin classes}
	
	Given $m\in\mathbb{R}$, we define the Shubin classes of symbols as
	\[\Gamma^m(\mathbb{R}^n) = S(\langle z\rangle^m;\langle z\rangle,\langle z\rangle),\]
	where $z=(x,\xi)\in\mathbb{R}^{2n}$ and $\langle z\rangle = (1+|x|^2+|\xi|^2)^{\frac{1}{2}}$. Notice that $\langle z\rangle$ is a regular sub-linear weight and the strong uncertainty principle holds. We denote by $OP\Gamma^m(\mathbb{R}^n)$ the space of pseudo-differential operators with symbol in $\Gamma^m(\mathbb{R}^n)$. We have
	\[\mathrm{Op}(\langle z\rangle^2) = 1-\Delta+|x|^2 = 1+\mathcal{H},\]
	where $\mathcal{H}=-\Delta+|x|^2$ is the harmonic oscillator on $\mathbb{R}^n$. Hence, for each $s\in\mathbb{R}^n$, we write
	\[\mathrm{Op}(\langle z\rangle^s) = (1+\mathcal{H})^{\frac{s}{2}}.\]
	
	For each $s\in\mathbb{R}$, we define the Shubin Sobolev space $H_\Gamma^s(\mathbb{R}^n)$ as
	\[H_\Gamma^s(\mathbb{R}^n)=H(\langle z\rangle^s)=\{u\in\mathcal{S}'(\mathbb{R}^n)\,:\, (1+\mathcal{H})^{\frac{s}{2}}u\in L^2(\mathbb{R}^n)\}.\]
	
	By the strong uncertainty principle, we have
	\[\mathcal{S}(\mathbb{R}^n)=\bigcap_{s\in\mathbb{R}}H^s_\Gamma(\mathbb{R}^n)\quad\text{and}\quad \mathcal{S}'(\mathbb{R}^n)=\bigcup_{s\in\mathbb{R}}H^s_\Gamma(\mathbb{R}^n).\]
	
	For a pseudo-differential operator $P\in OP\Gamma^m(\mathbb{R}^n)$, $m\geq 0$, condition \eqref{gh_loss} can be written as
	\begin{equation}\label{loss_shubin}
		u\in\mathcal{S}'(\mathbb{R}^n),\ p(x,D)u\in H_\Gamma^s(\mathbb{R}^n)\ \Rightarrow\ u\in H_\Gamma^{s+m'}(\mathbb{R}^n),
	\end{equation}
	for every $s\in\mathbb{R}$ and some $m'\leq m$. In this case, we have a loss of $m-m'$ in the order of the Shubin Sobolev spaces. Hence, Theorem \ref{thm_gh} has the following form:
	
	\begin{theorem}\label{thm_gh_shubin}
		Let $P\in OP\Gamma^m(\mathbb{R}^n)$, be a pseudo-differential operator satisfying \eqref{loss_shubin}, for every $s\in\mathbb{R}$ and a fixed $m'>0$. If $A\in OP\Gamma^{\tilde m}(\mathbb{R}^n)$, with $\tilde{m}<m'$, then $P+A$ satisfies \eqref{loss_shubin} for every $s\in\mathbb{R}$ and the same $m'$. In particular, $P+A$ is $\mathcal{S}$-globally hypoelliptic.
	\end{theorem}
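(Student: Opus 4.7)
The plan is to deduce Theorem \ref{thm_gh_shubin} as a direct specialization of Theorem \ref{thm_gh} to the Shubin setting. Concretely, I would take $\Phi=\Psi=\langle z\rangle$, $M'=\langle z\rangle^m$, $M_0=\langle z\rangle^{m'}$, and $\tilde M=\langle z\rangle^{\tilde m}$, and verify that every hypothesis of the general theorem holds for these choices.

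The verification splits into two routine parts. First, I would check that $\langle z\rangle$ is a sub-linear regular weight, that any power $\langle z\rangle^t$ lies in $S(\langle z\rangle^t;\langle z\rangle,\langle z\rangle)$ (so all the weights appearing above are regular), and that the strong uncertainty principle holds, since $\Phi\Psi=\langle z\rangle^2\gtrsim 1+|x|+|\xi|$. Hence $h=\Phi^{-1}\Psi^{-1}=\langle z\rangle^{-2}$. Second, using the definition $H_\Gamma^s(\mathbb{R}^n)=H(\langle z\rangle^s)$, condition \eqref{loss_shubin} is literally \eqref{gh_loss} taken along the regular weights $M=\langle z\rangle^s$, $s\in\mathbb{R}$, with the same $M_0=\langle z\rangle^{m'}$.

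The key growth condition of Theorem \ref{thm_gh} becomes
\[
M_0\tilde M^{-1}=\langle z\rangle^{m'-\tilde m}\gtrsim \langle z\rangle^{2\varepsilon}=h^{-\varepsilon},
\]
which holds for any $0<\varepsilon\leq (m'-\tilde m)/2$; the assumption $\tilde m<m'$ makes this choice possible. The one point meriting a line of justification is that Theorem \ref{thm_gh} a priori asks \eqref{gh_loss} to hold for \emph{every} regular weight $M$, whereas in the Shubin setting I only know it for powers $\langle z\rangle^s$. Inspecting the iteration in the proof of Theorem \ref{thm_gh}, however, the starting weight can be taken to be $M_u=\langle z\rangle^{s_u}$ (since $\mathcal{S}'(\mathbb{R}^n)=\bigcup_s H_\Gamma^s$), and the recursively produced weights $M_u M_0^{k}/\tilde M^{k}$ are all of the form $\langle z\rangle^{s_u+km'-k\tilde m}$, so condition \eqref{loss_shubin} is applied only along powers of $\langle z\rangle$ and exactly as available.

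Given how mechanical the translation is, there is no real conceptual obstacle; the main effort is the bookkeeping above, in particular pinning down $\varepsilon$ from the gap $m'-\tilde m$ and confirming that $H_\Gamma^s=H(\langle z\rangle^s)$ makes \eqref{loss_shubin} coincide with the hypothesis of Theorem \ref{thm_gh} on the relevant subfamily of regular weights. The $\mathcal{S}$-global hypoellipticity of $P+A$ then follows from the final clause of Theorem \ref{thm_gh}.
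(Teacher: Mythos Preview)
Your proposal is correct and follows the paper's approach exactly: Theorem \ref{thm_gh_shubin} is stated there as the form Theorem \ref{thm_gh} takes when $\Phi=\Psi=\langle z\rangle$, and the accompanying remark verifies the $\varepsilon$-condition from the gap $m'-\tilde m>0$. Your additional check that the iteration in the proof of Theorem \ref{thm_gh} only invokes \eqref{gh_loss} along powers $\langle z\rangle^s$---so that \eqref{loss_shubin} is already enough---is a legitimate refinement the paper leaves implicit.
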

	
	\begin{remark}
		Here, notice that the assumption $\tilde m<m'$ implies that
		\[\frac{\langle z\rangle^{m'}}{\langle z\rangle^{\tilde{m}}} = h(z)^{-(m'-\tilde{m})}.\]
		
		Then, we have $\varepsilon=m'-\tilde m>0$, so the ``$\varepsilon$-condition" on the statement of Theorem \ref{thm_gh} is satisfied.
	\end{remark}
	
	A symbol $p(z)\in\Gamma^m(\mathbb{R}^n)$, $m\in\mathbb{R}$, is $\Gamma$-elliptic if there exists $R>0$ such that
	\[|p(z)|\gtrsim \langle z\rangle^m,\quad |z|\geq R.\]
	
	More generally, given $0<\rho\leq 1$, a symbol $p(z)\in\Gamma^m(\mathbb{R}^n)$, $m\geq 0$, is $\Gamma_\rho$-hypoelliptic if there exists $0\leq m'\leq m$ and $R>0$ such that
	\[|p(z)|\gtrsim \langle z \rangle^{m'},\quad |z|\geq R\]
	and, for every $\gamma\in\mathbb{N}_0^{2n}$, we have
	\[|\partial_\gamma p(z)|\lesssim |p(z)|\langle z\rangle^{-\rho|\gamma|},\quad |z|\geq R.\]
	
	For $m<0$, the definition is the same with $m'\geq m$. If $p(x,\xi)$ is a $\Gamma_\rho$-hypoelliptic symbol, then $p(x,D)$ admits a parametrix. For every $s\in\mathbb{R}$ and $t<s+m'$, we have
	\[\|u\|_{H_\Gamma^{s+m'}}\lesssim \|p(x,D)u\|_{H_\Gamma^s}+\|u\|_{H_\Gamma^t},\]
	which implies \eqref{loss_shubin} and consequently the $\mathcal{S}$-global hypoellipticity of $p(x,D)$.
	
	Now, suppose that $p(x,D)$ is a differential operator. By \cite[Proposition 2.2.1]{NicRod}, the only differential operators in Shubin classes are those with polynomial coefficients, that is, operators of the form
	\[p(x,D) = \sum_{|\alpha|+|\beta|\leq m}c_{\alpha\beta}x^\beta D^\alpha,\quad c_{\alpha\beta}\in\mathbb{C},\]
	with symbol
	\[p(z) = \sum_{|\gamma|\leq m}c_\gamma z^\gamma,\quad c_\gamma\in\mathbb{C}.\]
	
	In this case, the $\Gamma_\rho$-hypoellipticity of $p(z)$ can be characterized simply by the estimate
	\[|\partial_z^\gamma p(z)|\lesssim |p(z)|\langle z \rangle^{\rho m},\quad |z|\geq R.\]
	
	Indeed, for some $\gamma\in\mathbb{N}_0^{2n}$ with $|\gamma|=m$, we obtain
	\[\langle z\rangle^{-\rho m}|p(z)|\gtrsim |\partial^\gamma p(z)| = C > 0,\quad |z|\geq R,\]
	which implies
	\[|p(z)|\gtrsim \langle z\rangle^{\rho m},\quad |z|\geq R.\]
	
	Finally, by Theorem \ref{thm_gh_shubin}, the $\mathcal{S}$-global hypoellipticity of differential operators in $OP\Gamma^m(\mathbb{R}^n)$ with $\Gamma_\rho$-hypoelliptic symbols is stable under perturbations by operators in $OP\Gamma^{\tilde m}(\mathbb{R}^n)$, with $\tilde m<\rho m$.
	
	\begin{example}[Schr\"odinger-type operators]
		Consider the operator $P=-\Delta+V(x)$ on $\mathbb{R}^n$, where		\[V(x)=\sum_{|\beta|\leq 2k}c_\beta x^\beta\]
		is a polynomial of degree $2k$, $k\in\mathbb{N}$. If the symbol 
		\[p(x,\xi)=|\xi|^2+V(x)\in \Gamma^2(\mathbb{R}^n)\]
		of $P$ is such that
		\[p(x,\xi)\neq 0,\quad\forall (x,\xi)\in\mathbb{R}^{2n}\setminus\{(0,0)\},\]
		then $p(x,\xi)$ is $\Gamma_{\rho}$-hypoelliptic, with $\rho=1/k$, see Proposition 2.5.3 and Example 2.5.5 in \cite{NicRod}. In this case, $P$ is $\mathcal{S}$-globally hypoelliptic and this property is stable under perturbation by pseudo-differential operators on $OP\Gamma^{m}$, with $m\leq 2/k$.
	\end{example}

	\subsection{SG classes}
	
	Now, if $m=(m_1,m_2)\in\mathbb{R}^2$, we define the SG classes by
	\[SG^m(\mathbb{R}^n) = S(\langle\xi\rangle^{m_1}\langle x\rangle^{m_2};\langle x\rangle,\langle\xi\rangle),\]
	where $\langle\xi\rangle^{m_1}\langle x\rangle^{m_2}$ is a regular weight, and denote by $OPG^{m}(\mathbb{R}^n)$ the set of operators with symbol in $SG^m(\mathbb{R}^n)$. It is clear that $\Phi(x,\xi)=\langle x\rangle$ and $\Psi(x,\xi)=\langle\xi\rangle$ satisfy the strong uncertainty principle.
	
	A symbol $p(x,\xi)\in SG^m(\mathbb{R}^n)$ is SG-elliptic if there exists $R>0$ such that
	\[|p(x,\xi)|\gtrsim \langle\xi\rangle^{m_1}\langle x\rangle^{m_2},\quad |x|+|\xi|\geq R.\]
	
	Given $m=(m_1,m_2),m'=(m_1',m_2')\in\mathbb{R}^2$, we denote $m'\leq m$ if $m_1'\leq m_1$ and $m_2'\leq m_2$. We define $m'<m$ the same way.
	
	Then, a symbol  $p(x,\xi)\in SG^m(\mathbb{R}^n)$, $(0,0)\leq m\in\mathbb{R}^2$, is SG-hypoelliptic if there exists $R>0$ and $m'\in\mathbb{R}^2$, $m'\leq m$, such that
	\[|p(x,\xi)|\gtrsim \langle\xi\rangle^{m_1'}\langle x\rangle^{m_2'},\quad |x|+|\xi|\geq R,\]
	and, for every $\alpha,\beta\in\mathbb{N}_0^n$,
	\[|\partial_\xi^\alpha\partial_x^\beta p(x,\xi)|\lesssim |p(x,\xi)|\langle\xi\rangle^{-|\alpha|}\langle x\rangle^{-|\beta|},\quad |x|+|\xi|\geq R.\]
	
	If $m<0$, the definition is the same, but with $m'\geq m$. Also, if we have $m_1\geq 0$ and $m_2<0$, we take $m'$ so that $m_1'\leq m_1$ and $m_2'\geq m_2$, the same if $m_1\leq 0$ and $m_2\geq 0$. For simplicity, we will restrict us to the case $m\geq 0$. In any case, an operator with SG-hypoelliptic symbol admits a parametrix.
	
	For each $s=(s_1,s_2)\in\mathbb{R}^2$, we define the weighted Sobolev space
	\[H^{s}(\mathbb{R}^n) = H(\langle\xi\rangle^{s_1}\langle x\rangle^{s_2}) = \{u\in\mathcal{S}'(\mathbb{R}^n)\,:\, \Lambda^s u\in L^2(\mathbb{R}^n)\},\]
	where $\Lambda^s=\mathrm{Op}(\langle\xi\rangle^{s_1}\langle x\rangle^{s_2})$. By the strong uncertainty principle, we have
	\[\mathcal{S}(\mathbb{R}^n)=\bigcap_{s\in\mathbb{R}^2}H^s(\mathbb{R}^n)\quad\text{and}\quad \mathcal{S}'(\mathbb{R}^n)=\bigcup_{s\in\mathbb{R}^2}H^s(\mathbb{R}^n).\]
	
	Then, in SG classes, \eqref{gh_loss} can be written as
	\begin{equation}\label{loss_SG}
		u\in\mathcal{S}'(\mathbb{R}^n),\ p(x,D)u\in H^s(\mathbb{R}^n)\ \Rightarrow\ u\in H^{s+m'}(\mathbb{R}^n),
	\end{equation}
	and Theorem \ref{thm_gh} has the following form:
	
	\begin{theorem}
		Let $P\in OPG^m(\mathbb{R}^n)$, $(0,0)\geq m\in\mathbb{R}^2$, be a pseudo-differential operator satisfying \eqref{loss_SG} for every $s\in\mathbb{R}^2$ and a fixed $m'\in\mathbb{R}^2$, $m'\leq m$. If $A\in OPG^{\tilde m}$, with $\tilde{m}<m'$, then $P+A$ satisfies \eqref{loss_SG} for every $s\in\mathbb{R}^2$ and the same $m'\in\mathbb{R}^2$. In particular, $P+A$ is $\mathcal{S}$-globally hypoelliptic.
	\end{theorem}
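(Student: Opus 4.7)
The plan is to reduce the statement to a direct application of Theorem \ref{thm_gh}. In the SG framework the sub-linear weights are $\Phi(x,\xi)=\langle x\rangle$ and $\Psi(x,\xi)=\langle\xi\rangle$, both regular and already noted to satisfy the strong uncertainty principle, so the hypotheses needed to invoke Theorem \ref{thm_gh} are in place. I would set $M'=\langle\xi\rangle^{m_1}\langle x\rangle^{m_2}$, $M_0=\langle\xi\rangle^{m_1'}\langle x\rangle^{m_2'}$ and $\tilde M=\langle\xi\rangle^{\tilde m_1}\langle x\rangle^{\tilde m_2}$, observe that these are regular weights, and note that the family $M=\langle\xi\rangle^{s_1}\langle x\rangle^{s_2}$, $s\in\mathbb{R}^2$, produces exactly the scale $H^s(\mathbb{R}^n)$ appearing in \eqref{loss_SG}. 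Under this dictionary, the assumption \eqref{loss_SG} on $P$ translates verbatim into the hypothesis \eqref{gh_loss} of Theorem \ref{thm_gh}.

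The only computation to perform is the verification of the $\varepsilon$-condition $M_0\tilde M^{-1}\gtrsim h^{-\varepsilon}$ with $h=\langle x\rangle^{-1}\langle\xi\rangle^{-1}$. This reduces to
\[
\langle\xi\rangle^{m_1'-\tilde m_1}\langle x\rangle^{m_2'-\tilde m_2}\gtrsim \langle x\rangle^{\varepsilon}\langle\xi\rangle^{\varepsilon},
\]
which holds for any $0<\varepsilon\leq\min\{m_1'-\tilde m_1,\,m_2'-\tilde m_2\}$, a quantity made positive precisely by the strict vector inequality $\tilde m<m'$ assumed in the hypothesis. With this $\varepsilon$ in hand, Theorem \ref{thm_gh} delivers \eqref{gh_loss} for $P+A$ with the same $M_0$ across the full family of weights $\langle\xi\rangle^{s_1}\langle x\rangle^{s_2}$, which is exactly \eqref{loss_SG} for every $s\in\mathbb{R}^2$. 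The $\mathcal{S}$-global hypoellipticity of $P+A$ then follows as a formal consequence, by the same exhaustion $\mathcal{S}(\mathbb{R}^n)=\bigcap_s H^s(\mathbb{R}^n)$ already exploited in Section 2.

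The step I expect to be the only real subtlety, rather than a deep obstacle, is the $\varepsilon$-condition itself: since the Planck function in the SG calculus factors as $\langle x\rangle^{-1}\langle\xi\rangle^{-1}$, strict dominance of $M_0$ over $\tilde M$ at the power $h^{-\varepsilon}$ forces a gain of $\varepsilon$ \emph{simultaneously} in both the $x$- and $\xi$-components. This explains why the hypothesis must be the bi-indexed strict inequality $\tilde m<m'$ rather than, for instance, strict inequality only of the total order $\tilde m_1+\tilde m_2<m_1'+m_2'$; any such weakening would fail to produce a positive $\varepsilon$ and would lie outside the reach of Theorem \ref{thm_gh}.
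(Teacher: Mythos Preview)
Your proposal is correct and matches the paper's approach exactly: the paper states this theorem without proof, following it only with the remark that ``as in the Shubin case, the `$\varepsilon$-condition' of Theorem \ref{thm_gh} is satisfied by the assumption $\tilde m<m'$,'' which is precisely the computation you carry out. Your write-up is in fact more explicit than the paper's, and your closing observation about why the bi-indexed strict inequality is forced (rather than a condition on $\tilde m_1+\tilde m_2$) is a nice clarification not present in the original.

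One small point worth tightening: Theorem \ref{thm_gh} is phrased as requiring \eqref{gh_loss} for \emph{every} regular weight $M$, whereas the hypothesis \eqref{loss_SG} only gives it for the monomial weights $\langle\xi\rangle^{s_1}\langle x\rangle^{s_2}$. This is harmless here because the bootstrap in the proof of Theorem \ref{thm_gh} only ever applies \eqref{gh_loss} at weights of the form $M_u M_0^k\tilde M^{-k}$ and the target $M$; in the SG setting $M_u$ can be chosen monomial (via $\mathcal{S}'(\mathbb{R}^n)=\bigcup_s H^s(\mathbb{R}^n)$) and $M_0,\tilde M,M$ are monomial by construction, so every weight that arises stays monomial. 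The paper glosses over this as well, but ``translates verbatim'' slightly overstates the match.
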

	
	\begin{remark} 
	As in the Shubin case, the ``$\varepsilon$-condition" of Theorem \ref{thm_gh} is satisfied by the assumption $\tilde m<m'$.
	\end{remark}
	
	If $p(x,D)$ is an operator with hypoelliptic symbol, the existence of a parametrix gives us that, for every $s=(s_1,s_2)\in\mathbb{R}^2$ and $t\in\mathbb{R}^2$, $t<s+m'$, we have the following estimate on weighted Sobolev spaces:
	\begin{equation}\label{est_sob_SG}
		\|u\|_{H^{s+m'}}\lesssim \|p(x,D)u\|_{H^s}+\|u\|_{H^t},
	\end{equation}
	which implies \eqref{loss_SG}.	
	
	As an example, consider the following class of operators introduced by I. Camperi in \cite{Camperi}:
	
	\begin{example}
		Given $\mu\in\mathbb{N}$ and $0<\gamma<1$, consider symbols of the form
		\[p(x,\xi)= \underset{0\leq|\beta|\leq\gamma\mu}{\sum_{0\leq|\alpha|\leq\mu}} c_{\alpha\beta}x^\beta\xi^\alpha,\]
		with $(x,\xi)\in\mathbb{R}^2$. Set
		\[\lambda(x,\xi) = 1+|\xi^\mu|+|x^{\gamma\mu}\xi^\mu|,\quad (x,\xi)\in\mathbb{R}^2.\]
		
		
		We say that $p(x,\xi)$ is $\lambda$-elliptic if there exists $R>0$ such that $p(x,\xi)\gtrsim \lambda(x,\xi)$ for $|x|+|\xi|\geq R$. By \cite[Proposition 1]{Camperi}, a $\lambda$-elliptic symbol is SG-hypoelliptic. Moreover, notice that
		\[\lambda(x,\xi)\gtrsim \langle \xi\rangle^\mu\langle x\rangle^{\gamma\mu}\]
		for large $|x|+|\xi|$. Hence, an operator $p(x,D):\mathcal{S}(\mathbb{R})\to \mathcal{S}(\mathbb{R})$ with $\lambda$-elliptic symbol $p(x,\xi)$ is $\mathcal{S}$-globally hypoelliptic and the $\mathcal{S}$-global hypoellipticity of $p(x,D)$ is stable under perturbations by operators in $OPG^{m'}(\mathbb{R})$, with $m'=(m_1',m_2')<(\mu,\gamma\mu)$.
	\end{example}
	
	
	The SG classes of symbols can be extended to SG manifolds, a class of non-compact manifolds introduced by E. Schrohe in \cite{Schr1987}. An SG manifold is defined as a smooth manifold $M$ equipped with an SG atlas, that is, a finite atlas whose coordinate changes behave like $SG^{(0,1)}(\mathbb{R}^n)$-type symbols. This invariance under such diffeomorphisms ensures that SG symbol classes and, consequently, SG operators are well-defined on these manifolds. Furthermore, the notion of SG-(hypo)elliptic symbols also remains invariant under these transformations.
	
	An important subclass of SG manifolds are manifolds with cylindrical ends. These can be regarded as compact manifolds from which a finite number of open discs have been removed, and along the boundary of each removed disc, an infinite cylinder is smoothly attached. For a precise definition, we direct the reader to Maniccia and Panarese \cite{MP2002}. More broadly, this class encompasses the interior of compact manifolds with boundary, endowed with a Riemannian metric that pushes the boundary to infinity. These are known as asymptotically Euclidean manifolds or scattering manifolds (see \cite{Melrose_GST}), and naturally include manifolds with cylindrical ends. For a detailed discussion, see the Appendix of Coriasco and Doll \cite{CD2021}.
	
	On these manifolds, the Schwartz space and a scale of weighted Sobolev spaces can be defined using the operator $\Lambda^{s_1,s_2}$. This operator has a local symbol$\langle \xi\rangle^{s_1}$ in bounded charts and $\langle\xi\rangle^{s_1}\langle x\rangle^{s_2}$ in unbounded charts. By working in local coordinates, a parametrix for operators with SG-hypoelliptic symbols can be constructed by gluing local parametrices with a suitable partition of unity, see \cite[Theorem 3.9]{Schr1987}. This yields estimates of the form \eqref{est_sob_SG} on the scale of weighted Sobolev spaces on the manifolds. Further details can be found in \cite{cordes}.

	\subsection{H\"ormander classes}
	
	When we set $M(x,\xi)=\langle\xi\rangle^m$, $\Phi(x,\xi)=\langle\xi\rangle^{\delta}$, and $\Psi(x,\xi)=\langle\xi\rangle^{\rho}$, for $m\in\mathbb{R}$, $0\leq\delta<\rho\leq 1$, our symbol classes reduce to the H\"ormander classes $S_{\rho,\delta}^m(\mathbb{R}^n)=S(M;\Phi,\Psi)$.
	
	An important observation is that the pair $\Phi,\Psi$ does not satisfy the strong uncertanty principle. Consequently, our main result, Theorem \ref{thm_gh}, does not hold for these classes when considering the usual scale of Sobolev spaces.
	
	Consider, for example, the Laplacian operator $\Delta$, with symbol $-|\xi|^2$. While $\Delta$ is elliptic in the usual sense and thus hypoelliptic in terms of singular supports, it is not $\mathcal{S}$-globally hypoelliptic. This is because its kernel contains the constant functions, that do not belong to $\mathcal{S}(\mathbb{R}^n)$.
	
	The usual Sobolev $H^s(\mathbb{R}^n)$ corresponds to $H^{s,0}(\mathbb{R}^n)$, the weighted Sobolev space of order $(s,0)$ as defined in the previous subsection.
	
	Let us examine the free particle operator $P_\lambda = -\Delta-\lambda$, with $\lambda\in\mathbb{C}$. As shown in \cite[Example 3.1.10]{NicRod}, $P_\lambda$ is SG-elliptic if and only if $\lambda\notin[0,+\infty)$.  Clearly, $P_\lambda$ is elliptic in $S^2_{1,0}(\mathbb{R}^n)$, and therefore satisfies estimates of the form
	\begin{equation}\label{loss_horm}
	\|u\|_{H^{s+2}}\lesssim \|P_\lambda u\|_{H^s}+\|u\|_{H^t}
	\end{equation}
	on the standard Sobolev spaces. Furthermore, since $P_\lambda$ is SG-elliptic, it is also $\mathcal{S}$-globally hypoelliptic. However, if we consider the 0-order perturbation $A=\lambda$, the operator $P_\lambda+A=-\Delta$ is no longer $\mathcal{S}$-globally hypoelliptic. This occurs despite the perturbation being of strictly lower order and $P_\lambda$ itself exhibiting no loss of derivatives.
	
	This example illustrates that elliptic estimates of type \eqref{loss_horm} on standard Sobolev spaces do not, in general, imply $\mathcal{S}$-global hypoellipticity, nor do they guarantee the stability of this property under certain perturbations. This fundamental difference arises because the strong uncertainty principle, which is central to our framework, does not hold in these symbol classes. Hence the intersection of their Sobolev spaces is not $\mathcal{S}(\mathbb{R}^n)$ and the hypoellipticity with loss of derivatives does not imply the $\mathcal{S}$-global hypoellipticity. Also, we have 
	\[\bigcap_{m\in\mathbb{R}} S^m_{\rho,\delta}(\mathbb{R}^n)\neq \mathcal{S}(\mathbb{R}^{2d}),\]	
	which is essential in order to construct the parametrix in Theorem \ref{parametrix}, see the proof of \cite[Theorem 1.3.6]{NicRod} for the details.

\bibliographystyle{plain}
\bibliography{references}
	
\end{document}